\DeclareMathOperator*{\essinf}{ess\,inf}
\DeclareMathOperator*{\mes}{mes}
\DeclareMathOperator*{\defeq}{\mathrel{\mathop:}=}
\newcommand{\abs}[1]{\lvert#1\rvert}
\newcommand{\norm}[1]{\lVert#1\rVert}
\theoremstyle{plain}
\newtheorem{theorem}{\indent Theorem}[section]
\newtheorem{lemma}{\indent Lemma}[section]
\theoremstyle{definition}
\theoremstyle{remark}
\newtheorem{remark}{\indent Remark}[section]
\begin{document}
\title{Life Span of Solutions for a Semilinear
\\
Heat Equation with Initial Data
\\
Non-Rarefied at $\infty$
\footnote{This work is partially supported by NSFC (No. 11371153), Specialized Research Fund for the Doctoral Program of
High Educational Department of China.}}

\author{Zhiyong Wang\footnote{E-mail address: wangzhiyong236@163.com.}, \quad Jingxue Yin\footnote{Corresponding author. E-mail address: yjx@scnu.edu.cn.} \\
School of Mathematical Sciences, South China Normal University, \\
Guangzhou, 510631, PR China
}

\maketitle

\begin{abstract}
  We study the Cauchy problem for a semilinear heat equation with
  initial data non-rarefied at $\infty$. Our interest lies in the
  discussion of the effect of the non-rarefied factors on the life
  span of solutions, and some sharp estimates on the life span is
  established.

  \vspace{2ex}
  \noindent{\textit{MSC}: 35K05; 35B44}

  \vspace{2ex}
  \noindent{\textit{Keywords}: Life span; Heat equation; Non-rarefied}
\end{abstract}

\section{Introduction}
Consider the Cauchy problem
\begin{equation}
  \begin{cases}
    u_{t}=\Delta u+\abs{u}^{p-1}u,\quad &
    (x,t) \in \mathbb{R}^{n}\times(0,+\infty),\\
    u(x,0)=\phi(x),&x\in \mathbb{R}^{n},
  \end{cases}
  \label{P}
\end{equation}
where $p>1$, and $\phi$ is a non-negative, bounded and continuous
function in $\mathbb{R}^{n}$, which is not identically equal to zero.

It is well known that the solution $u(x,t)$ of \eqref{P} may blow up
in finite time $T^{\ast}$, that is $\lim\limits_{t\to
{T^\ast}^{-}}\norm{u(\cdot,t)}_{L^\infty}(\mathbb{R}^n)=\infty$, see
\cite{Fu1966, Hu2011}. The finite time $T^{\ast}$ is popularly called to
be the \textit{life span}, which depends heavily on the exponent $p$ and the
properties of the initial datum, and its study has arisen much
attention during recent years, see for example \cite{LN1992, GW1995,
  MY1998, MY2001, OY2011} and references therein.

It is worthy of mentioning that the properties of the initial data in
some neighbourhood of $\infty$ are shown to be crucial factors
affecting the life span of solutions. In this respect, for
$\phi(x)\equiv\lambda\psi(x)$ with $\psi(x)\in C(\mathbb R^n)\cap
L^\infty(\mathbb R^n)$, $\psi(x) \ge 0$, and $\psi(x)$ is positive in
some neighbourhood of $\infty$, the asymptotic behaviour of the life
span $T_\lambda$ seems to be interesting. Indeed, Lee and Ni
\cite{LN1992} proved that if $\liminf\limits_{\abs{x} \to
  \infty}\psi(x)= k > 0$, then $C_1\lambda^{1-p}\le T_\lambda\le
C_2\lambda^{1-p}$ for some positive constants $C_1$ and $C_2$ which
are independent of $\lambda$; while Gui and Wang \cite{GW1995} further
showed that if $\lim\limits_{\abs{x} \to \infty}\psi(x) = k$, then
${\displaystyle\lim_{\lambda\to 0^+}}\lambda^{p-1}T_\lambda=\frac1{p-1}k^{1-p}$.
More recently, Yamauchi \cite{Ya2011} has found that as long as the
initial datum $\phi(x)$ is positive in some {\bf conic neighbourhood}
of $\infty$, the solution does blow up in a finite time $T^*$, and
some elegant estimates on the life span are given.

\vspace{2ex}

The purpose of the present paper is to characterize the role of
rarefaction properties of the initial data at $\infty$, that is the
effect on the life span of solutions of \eqref{P}. We will show that
as long as $\infty$ is not rarefied, although it is permitted for
initial data to have zeros in any conic neighbourhood of $\infty$, the
solutions will blow up definitely.  Of course, we are interested in
estimating on the life span of solutions from the point of view of
density analysis.

Before stating our main results we should recall or introduce some
notations and conceptions. We begin with the definition of rarefaction
point (see \cite[p.~162]{VH1985}): A point $x_{0}$ is a \textit{point
  of rarefaction of the set $E$} if
\begin{equation*}
  \lim_{r \to 0^+} \frac{\mes \bigl( B(x_{0}, r) \cap E \bigr)}{\mes \bigl( B(x_{0},r)  \bigr)} = 0,
\end{equation*}
where $\mes(F)$ is the Lebesgue measure of the set $F$, and $B(x,r)$ is the ball
centered at $x$ with radius $r$. It is reasonable to say that $\infty$ is a
\textit{point of rarefaction of the set $E$} if
\begin{equation*}
  \lim_{r \to +\infty} \frac{\mes \bigl( B(0, r) \cap E \bigr)}{\mes \bigl( B(0,r)  \bigr)} = 0.
\end{equation*}
Alternatively, $\infty$ is called to be a \textit{rarefaction point of
  a non-negative function $\phi$} if for any $\alpha > 0$ it is a
point of rarefaction of the set $\{x \mid \phi(x) \ge \alpha\}$. We
have the following theorem.

\begin{theorem}\label{th:0}
If $\infty$ is not a rarefaction point of the initial datum $\phi$,
then the solution of \eqref{P} blows up in finite time
$T^{\ast}$ with
\begin{equation}
  \label{eq:0}
  T^{\ast} \le \frac{1}{p-1}\inf_{\alpha > 0}\Bigl( \alpha D(\alpha)  \Bigr)^{1-p},
\end{equation}
where
\begin{equation}\label{eq:D}
D(\alpha) \defeq \limsup\limits_{r \to +\infty}\frac{\mes \bigl( \{x \mid
  \phi(x) \ge \alpha \} \cap B(0, r) \bigr)}{\mes \bigl(B(0,r)\bigr)}.
\end{equation}
\end{theorem}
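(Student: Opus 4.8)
The plan is to run a Kaplan-type weighted-average argument, reducing the PDE to a scalar differential inequality of the form $F'\ge-\mu F+F^{p}$ and then sending the ``curvature'' parameter $\mu$ to $0$. First, since $\phi\ge 0$ the comparison principle gives $u\ge 0$, so the absolute value may be dropped and the equation reads $u_{t}=\Delta u+u^{p}$. For a normalized nonnegative weight $\psi$ (to be chosen) I would set $F(t)\defeq\int_{\mathbb{R}^{n}}u(x,t)\psi(x)\,dx$, which is finite for $t<T^{\ast}$ because $u(\cdot,t)$ is bounded up to the blow-up time. The natural first choice is the heat kernel $\psi(x)=G(x,\tau)=(4\pi\tau)^{-n/2}e^{-\abs{x}^{2}/4\tau}$, whose only relevant features are $\int_{\mathbb{R}^{n}}G(\cdot,\tau)=1$ and the pointwise curvature bound $\Delta G(\cdot,\tau)\ge-\frac{n}{2\tau}G(\cdot,\tau)$.

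Differentiating and integrating by parts (the rapid decay of $G$ justifies discarding boundary terms), I get $F'(t)=\int u\,\Delta\psi+\int u^{p}\psi$. Using $u\ge 0$ together with the curvature bound gives $\int u\,\Delta\psi\ge-\mu\int u\,\psi=-\mu F$ with $\mu=\frac{n}{2\tau}$; and since $\psi\,dx$ is a probability measure, Jensen's inequality yields $\int u^{p}\psi\ge\bigl(\int u\,\psi\bigr)^{p}=F^{p}$. Hence $F$ satisfies $F'(t)\ge-\mu F(t)+F(t)^{p}$.

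I would then analyze this inequality. If $F(0)>\mu^{1/(p-1)}$, then $F$ stays above $\mu^{1/(p-1)}$ and $F'\ge F^{p}\bigl(1-\mu F^{1-p}\bigr)\ge\bigl(1-\mu F(0)^{1-p}\bigr)F^{p}$, so comparison with $y'=\bigl(1-\mu F(0)^{1-p}\bigr)y^{p}$ forces $F$, and hence $\norm{u(\cdot,t)}_{L^{\infty}}$, to blow up no later than $T\defeq\frac{F(0)^{1-p}}{(p-1)\bigl(1-\mu F(0)^{1-p}\bigr)}$; therefore $T^{\ast}\le T$. Letting the weight spread out, $\tau\to+\infty$ so that $\mu\to 0$ while $F(0)$ stays bounded below, this collapses to $T^{\ast}\le\frac{1}{p-1}F(0)^{1-p}$ in the limit. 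The hypothesis that $\infty$ is not a rarefaction point is exactly what guarantees $F(0)$ can be kept bounded below, so that blow-up genuinely occurs.

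It remains to feed in the density. Writing $E_{\alpha}=\{x\mid\phi(x)\ge\alpha\}$ and using $\phi\ge\alpha\mathbbm{1}_{E_{\alpha}}$, one has $F(0)\ge\alpha\int_{E_{\alpha}}\psi$, and the goal is to choose the weights so that $\int_{E_{\alpha}}\psi\to D(\alpha)$. This is where the main difficulty lies. The cleanest match to the uniform ball-density $D(\alpha)=\limsup_{r}\mes(E_{\alpha}\cap B(0,r))/\mes(B(0,r))$ comes from weights that are asymptotically uniform on $B(0,R)$ along a sequence $R=r_{j}\to\infty$ realizing the $\limsup$, since then $\int_{E_{\alpha}}\psi\approx\mes\bigl(E_{\alpha}\cap B(0,r_{j})\bigr)/\mes\bigl(B(0,r_{j})\bigr)\to D(\alpha)$. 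The heat kernel, however, carries a fixed non-flat radial profile, so it only sees a profile-weighted average of the local density, which for a density oscillating in the logarithmic radial variable can fall short of the $\limsup$. I therefore expect the crux to be reconciling two competing demands: one must take weights nearly constant on $B(0,R)$ to capture $D(\alpha)$ sharply, yet keep the curvature term controlled, i.e. handle the indefinite contribution of $\int u\,\Delta\psi$ (equivalently the boundary flux $\int_{\partial B(0,R)}\partial_{\nu}u$) from the thin transition layer while still arranging $\mu\to 0$. Once $F(0)\to\alpha D(\alpha)$ is secured for each fixed $\alpha>0$, the bound $T^{\ast}\le\frac{1}{p-1}\bigl(\alpha D(\alpha)\bigr)^{1-p}$ follows, and taking the infimum over $\alpha>0$ gives \eqref{eq:0}.
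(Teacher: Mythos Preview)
Your diagnosis of the difficulty is exactly right, and the gap is not repairable along the lines you sketch. With an origin-centered Gaussian, a change of variable gives $\int_{E_\alpha}G(\cdot,\tau)=\int_0^\infty \rho(2\sqrt{\tau}\,s)\,w(s)\,\mathrm{d}s$, where $\rho(r)=\mes\bigl(E_\alpha\cap B(0,r)\bigr)/\mes\bigl(B(0,r)\bigr)$ and $w$ is a \emph{fixed} probability density on $(0,\infty)$; when $\rho$ genuinely oscillates in $\log r$ this weighted average stays bounded strictly below $D(\alpha)=\limsup\rho$ for every $\tau$. Replacing $G$ by a weight nearly constant on $B(0,R)$ does recover $D(\alpha)$, but then, as you anticipate, the transition layer forces a curvature/flux loss that cannot be sent to zero simultaneously with pinning $F(0)$ near $\alpha D(\alpha)$. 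So the plan as written cannot close.

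The paper resolves both issues by two changes. First, it uses Weissler's moving weight in place of a static Kaplan weight: for fixed $z$ and $t<T^\ast$ set $G(\bar t)=\int_{\mathbb{R}^n} g(z,x,t-\bar t)\,u(x,\bar t)\,\mathrm{d}x$; the semigroup identity makes the Laplacian contribution vanish exactly, yielding $G'\ge G^p$ with $G(0)=\int g(z,y,t)\phi(y)\,\mathrm{d}y$ and hence $t\le\frac{1}{p-1}\bigl(\sup_{z}\int g(z,y,t)\phi(y)\,\mathrm{d}y\bigr)^{1-p}$ with no $\mu$-loss. Second---and this is the idea missing from your outline---it \emph{translates rather than dilates}: for fixed $t$, pick a large ball $B(z_k,r_k)$ realising density at least $\overline D(\alpha)-\varepsilon$, set $\bar r_k=\sqrt{r_k}$, and average $x\mapsto\int_{B(x,\bar r_k)}g(x,y,t)\phi(y)\,\mathrm{d}y$ over $x\in B(z_k,r_k-\bar r_k)$. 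Fubini together with $\int_{B(y,\bar r_k)}g(x,y,t)\,\mathrm{d}x\ge 1-\delta$ (valid once $\bar r_k\gg\sqrt t$) shows this average is at least $(1-\delta)\alpha\bigl(\overline D(\alpha)-\varepsilon\bigr)+o(1)$, so some center $x_k$ achieves $\int g(x_k,y,t)\phi(y)\,\mathrm{d}y\ge\alpha\overline D(\alpha)-o(1)$. Since $\overline D(\alpha)\ge D(\alpha)$, \eqref{eq:0} follows. The moral: keep the kernel width fixed and move its center, instead of fixing the center and spreading the kernel.
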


In what follows, we do not intend to give a proof for the above theorem,
but prefer to present a stronger version of Theorem \ref{th:0} in
the following settings. Let $\alpha, r > 0$, denote
\begin{equation*}
  D(\alpha; r) \defeq \sup_{x \in \mathbb{R}^{n}}\frac{\mes \bigl( B(x,r)\cap
    \{y\mid \phi(y)\ge \alpha\} \bigr)}{\mes \bigl( B(x,r)  \bigr)},
\end{equation*}
and define
\begin{equation}
  \overline{D}(\alpha) := \limsup_{r\to +\infty}D(\alpha;r).
\end{equation}

We are now in a position to present the main result of the following theorem.

\begin{theorem}\label{th:main}
  Suppose that there exists $\alpha > 0$ such that $\overline{D}(\alpha) > 0$.
  Then the solution of
  \eqref{P} blows up in finite time $T^{\ast}$ with
  \begin{equation}
    \label{eq:main}
    T^{\ast}\le \frac{1}{p-1}\Bigl( \alpha \overline{D}(\alpha)  \Bigr)^{1-p}.
  \end{equation}
\end{theorem}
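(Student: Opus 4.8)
The plan is to combine a Kaplan-type concavity argument run against a backward heat kernel with a density lemma that converts the resulting Gaussian average of the initial datum into the ball-density quantity $\overline{D}(\alpha)$ with the \emph{sharp} constant. First, since $\phi\ge 0$, the comparison principle gives $u\ge 0$, so $|u|^{p-1}u=u^p$. I would fix a center $x_0\in\mathbb{R}^n$ and a time $\tau\in(0,T^\ast)$, and set $\psi(y,t)=G(x_0-y,\tau-t)$ for $t\in[0,\tau)$, where $G$ is the heat kernel. This $\psi$ is a probability density in $y$ for each fixed $t$ and solves the backward heat equation $\psi_t+\Delta\psi=0$. Defining $F(t)=\int_{\mathbb{R}^n}u(y,t)\psi(y,t)\,dy$ and differentiating, the Laplacian terms cancel after integration by parts, leaving $F'(t)=\int u^p\psi$; Jensen's inequality (for the convex map $s\mapsto s^p$ against the probability measure $\psi\,dy$) then yields $F'(t)\ge F(t)^p$.

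Next I would integrate this inequality: writing $(F^{1-p})'\le-(p-1)$ and integrating on $[0,t]$ gives $F(t)^{1-p}\le F(0)^{1-p}-(p-1)t$. Because $u$ exists and is bounded on $[0,\tau)$, the quantity $F$ is finite there, so the right-hand side must stay positive for all $t<\tau$; letting $t\uparrow\tau$ gives $\tau\le\frac{1}{(p-1)F(0)^{p-1}}$, with $F(0)=\int\phi(y)G(x_0-y,\tau)\,dy$. Using $\phi\ge\alpha\,\mathbbm{1}_{E}$ with $E=\{\phi\ge\alpha\}$ and taking the supremum over $x_0$, this becomes
\[
  \tau\le\frac{1}{(p-1)\bigl(\alpha\,M(\tau)\bigr)^{p-1}},\qquad M(\tau):=\sup_{x_0\in\mathbb{R}^n}\int_{E}G(x_0-y,\tau)\,dy,
\]
valid for every $\tau<T^\ast$.

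The decisive step, and the one I expect to be the main obstacle, is the density lemma $M(\tau)\ge\overline{D}(\alpha)$ for every $\tau>0$: one must recover the exact constant $\overline{D}(\alpha)$ rather than a fraction of it, so the crude bound replacing $G$ by its value on the boundary of a ball is too lossy. The trick I would use is to probe at a scale $R$ far larger than $\sqrt{\tau}$. Averaging the map $x_0\mapsto\int_{E}G(x_0-y,\tau)\,dy$ over a ball $B(c,R)$ and applying Fubini, the inner integral $\int_{B(c,R)}G(x_0-y,\tau)\,dx_0$ is at most $1$ and at least $1-\epsilon$ once $y\in B(c,R-L)$, where $L=L(\epsilon,\tau)$ captures all but $\epsilon$ of the Gaussian mass. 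Since $M(\tau)$ dominates this average, one obtains $M(\tau)\ge(1-\epsilon)\bigl((R-L)/R\bigr)^{n}D(\alpha;R-L)$. Letting $R\to\infty$ (so the volume ratio tends to $1$ and $\limsup_{R}D(\alpha;R-L)=\overline{D}(\alpha)$) and then $\epsilon\to 0$ yields $M(\tau)\ge\overline{D}(\alpha)$.

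Finally, inserting $M(\tau)\ge\overline{D}(\alpha)$ into the displayed bound gives $\tau\le\frac{1}{(p-1)(\alpha\overline{D}(\alpha))^{p-1}}$ for all $\tau<T^\ast$; letting $\tau\uparrow T^\ast$ produces \eqref{eq:main}, and in particular shows $T^\ast$ is finite, so the solution blows up. The remaining points to verify are routine: the justification of differentiating under the integral sign and of the integration by parts (standard for the bounded classical solution and the rapidly decaying kernel), and the positivity $F(0)>0$, which holds automatically since $\overline{D}(\alpha)>0$ forces $E$ to have positive Lebesgue measure and $G>0$ everywhere.
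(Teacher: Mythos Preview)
Your proof is correct and follows essentially the same strategy as the paper: a Weissler/Kaplan-type inequality (the paper's Lemma~\ref{le:life}) reducing the life-span bound to showing $\sup_{z}\int g(z,y,\tau)\phi(y)\,\mathrm{d}y\ge\alpha\,\overline{D}(\alpha)$, followed by the same Fubini-and-averaging trick (the paper's Lemma~\ref{le:b4}) to obtain that lower bound with the sharp constant. Your density step, using a fixed Gaussian cutoff scale $L=L(\epsilon,\tau)$ in place of the paper's growing intermediate scale $\bar r_k=\sqrt{r_k}$, is in fact a slightly cleaner packaging of the identical idea.
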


We shall give some comments on Theorem \ref{th:main}. First, by the
definition of $\overline{D}(\alpha)$ and $D(\alpha)$, Theorem
\ref{th:main} implies Theorem \ref{th:0} directly. Second, by a simple
comparison argument we obtain the lower bound of the life span:
\begin{equation*}
  \frac{1}{p-1}\norm{\phi}_{L^{\infty}(\mathbb{R}^{n})}^{1-p} \le
  T^{\ast}.
\end{equation*}
Thus Theorem \ref{th:main} can show that the minimal time blow-up
occurs\footnote{If the blow-up time of the solution of the problem
  \eqref{P} is the same as that of the ode: $u'=u^{p}$ with the
  initial value $u(0) = \norm{\phi}_{L^{\infty}(\mathbb{R}^{n})}$, we
  say that the minimal time blow-up occurs, for more details see
  Yamauchi \cite{Ya2011} and references therein.} for some initial data
(for the details see the example in Section \ref{se:contain}).
Finally, we point out that to prove Theorem \ref{th:main} we take good
advantage of basic properties of the heat kernel, so we believe that
the argument is general and can be applied to similar problems posed
on manifolds, e.g., the hyperbolic space, see Remark \ref{re:extand}
and Remark \ref{re:final}.

\vspace{2ex} This paper is organized as follows. In Section
\ref{se:proof} we give the proof of Theorem \ref{th:main}. Subsequently,
we show that Theorem \ref{th:main} implies the  main results of Yamauchi
\cite{Ya2011}, in Section \ref{se:contain}.

\section{Proof of Theorem \ref{th:main}}\label{se:proof}
Before proving Theorem \ref{th:main} we first introduce the basic
properties of the heat kernel in $\mathbb{R}^{n}$, and give a lemma on
the life span of the solutions of \eqref{P}.

\vspace{2ex}

The solution of \eqref{P} can be written as (see \cite[p.51, (17)]{Ev2010}):
\begin{equation*}
  u(x,t)=\int_{\mathbb{R}^{n}}g(x,y,t)\phi(y)\,\mathrm{d}y
  +\int_{0}^{t}\Bigl(\int_{\mathbb{R}^{n}}g(x,y,t-s)\abs{u}^{p-1}u(y,s)\,\mathrm{d}y \Bigr)\,\mathrm{d}s.
\end{equation*}
Here $g$ is the heat kernel in $\mathbb{R}^{n}$, which is a function
of the distance of $x,y\in \mathbb{R}^{n}$ and the time $t$, that is,
\begin{equation}\label{eq:same}
  g(x,y,t)=k\bigl(d(x,y),t\bigr).
\end{equation}

We summarize some elementary properties of the heat kernel in the
following lemma.
\begin{lemma}[{\cite[Section 1.3 and 2.7]{G2009}}]\label{le:basic}
  Let $x, y, z \in \mathbb{R}^{n}$, and $s, t > 0$.
\begin{itemize}
	\item[(i)] $g(x,y,t)=g(y,x,t);$
	\item[(ii)] $g(Tx,Ty,t)=g(x,y,t), \textrm{ where } T \textrm{ is an isometry in }
    \mathbb{R}^{n};$
  \item[(iii)] Semigroup property
    \begin{equation}
      \int_{\mathbb{R}^{n}}g(x,y,t)g(y,z,s)\,\mathrm{d}y =
      g(x,z,s+t);
      \label{eq:semigroup}
    \end{equation}
  \item[(iv)] Conservation of probability
    \begin{equation}
      \int_{\mathbb{R}^{n}}g(x,y,t)\,\mathrm{d}y=1.
      \label{eq:probability}
    \end{equation}
\end{itemize}
\end{lemma}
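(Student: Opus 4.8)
The plan is to exploit the fact that in $\mathbb{R}^n$ the heat kernel has the explicit Gaussian form
\begin{equation*}
  g(x,y,t) = (4\pi t)^{-n/2}\exp\Bigl( -\frac{\abs{x-y}^2}{4t} \Bigr),
\end{equation*}
which realizes the distance representation \eqref{eq:same} with $k(d,t) = (4\pi t)^{-n/2}e^{-d^2/(4t)}$ and $d(x,y)=\abs{x-y}$. Once this form is in hand each of the four assertions reduces to an elementary manipulation, and I would take them in the order (i), (ii), (iv), (iii), postponing the only genuinely computational step to the end.

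For (i), symmetry is immediate from $\abs{x-y}=\abs{y-x}$, so that $g(x,y,t)=k(\abs{x-y},t)=k(\abs{y-x},t)=g(y,x,t)$. For (ii), I would use that a Euclidean isometry $T$ preserves distances, $\abs{Tx-Ty}=\abs{x-y}$; combined with \eqref{eq:same} this gives $g(Tx,Ty,t)=k(\abs{Tx-Ty},t)=k(\abs{x-y},t)=g(x,y,t)$. Neither of these requires the explicit Gaussian beyond the distance representation itself. For (iv), I would substitute $w=(y-x)/(2\sqrt{t})$, under which $\mathrm{d}y=(2\sqrt{t})^{n}\,\mathrm{d}w$ and the integral in \eqref{eq:probability} becomes $\pi^{-n/2}\int_{\mathbb{R}^n}e^{-\abs{w}^2}\,\mathrm{d}w$; factoring this into $n$ one-dimensional Gaussian integrals, each equal to $\sqrt{\pi}$, yields the value $1$.

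The one step carrying real content is the semigroup property (iii). Here I would write the left side of \eqref{eq:semigroup} as the Gaussian convolution
\begin{equation*}
  \int_{\mathbb{R}^n}(4\pi t)^{-n/2}(4\pi s)^{-n/2}\exp\Bigl( -\frac{\abs{x-y}^2}{4t}-\frac{\abs{y-z}^2}{4s} \Bigr)\,\mathrm{d}y,
\end{equation*}
and complete the square in the variable $y$. Collecting the $y$-dependent part of the exponent into a single quadratic $-\frac{s+t}{4st}\bigl\lvert y-\frac{sx+tz}{s+t}\bigr\rvert^{2}$ and simplifying the leftover constant, the exponent splits as this perfect square plus $-\abs{x-z}^2/\bigl(4(s+t)\bigr)$. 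The residual Gaussian integral in $y$ then contributes a factor $\bigl(4\pi st/(s+t)\bigr)^{n/2}$ which, combined with the two prefactors, collapses to exactly $(4\pi(s+t))^{-n/2}$, so the whole expression equals $g(x,z,s+t)$.

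The main obstacle is precisely this algebra of completing the square and tracking the normalizing constants; the remaining three parts are direct. As an alternative to the explicit computation one could instead invoke that $t\mapsto e^{t\Delta}$ is a semigroup on $L^{\infty}(\mathbb{R}^n)$, from which \eqref{eq:semigroup} follows by uniqueness of the integral kernel, but I would favour the self-contained Gaussian calculation since it also supplies (iv) along the way and makes the normalization transparent.
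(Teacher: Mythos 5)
Your verification is correct in every detail: the distance representation \eqref{eq:same} disposes of (i) and (ii), the substitution $w=(y-x)/(2\sqrt{t})$ gives \eqref{eq:probability}, and in (iii) the completed square $-\frac{s+t}{4st}\bigl\lvert y-\frac{sx+tz}{s+t}\bigr\rvert^{2}$, the residual exponent $-\abs{x-z}^{2}/\bigl(4(s+t)\bigr)$, and the constant bookkeeping $(4\pi t)^{-n/2}(4\pi s)^{-n/2}\bigl(4\pi st/(s+t)\bigr)^{n/2}=\bigl(4\pi(s+t)\bigr)^{-n/2}$ all check out. Note, however, that the paper offers no proof of this lemma at all --- it is quoted from Grigor'yan \cite{G2009} --- so the comparison is really between your self-contained Gaussian computation and the abstract viewpoint the authors deliberately adopt. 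As Remark \ref{re:extand} makes explicit, the point of isolating (i)--(iv) is that the proof of Theorem \ref{th:main} uses only these properties together with \eqref{eq:same}, never the explicit kernel: (i)--(iii) are general heat-kernel facts, (iv) holds on any complete manifold with Ricci curvature bounded below, and \eqref{eq:same} on homogeneous spaces, which is what lets the argument transfer to, e.g., the hyperbolic space (Remark \ref{re:final}). Your computation proves the lemma as stated in $\mathbb{R}^{n}$ but is tied to the Euclidean Gaussian; the alternative you mention in passing --- obtaining \eqref{eq:semigroup} from the semigroup law of $e^{t\Delta}$ together with uniqueness of integral kernels --- is the argument that survives that generalization and is closer in spirit to the cited source. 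For the purposes of this paper, though, your explicit route is perfectly adequate and has the merit you claim: it makes the normalization in (iv) transparent rather than appealing to stochastic completeness.
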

\begin{remark}\rm
  In $\mathbb{R}^{n}$ it is well known that $g(x,y,t)=\frac{1}{(4\pi
    t)^{n/2}}\exp \bigl(-\frac{\abs{x-y}^{2}}{4t}\bigr)$. In this
  paper we take $T$ to be the translation $T(x) = x + x_{0}$ for fixed
  $x_{0} \in \mathbb{R}^{n}$.
\end{remark}
\begin{remark}\label{re:extand}\rm
  In Lemma \ref{le:basic} the properties (i) -- (iii) are standard for
  the heat kernel and (iv) is satisfied for the manifold that is
  complete and with Ricci curvature bounded below (see \cite[Theorem
  5.2.6]{D1989}). The property \eqref{eq:same}, used only to derive
  the uniform estimate \eqref{eq:2.12}, is satisfied for the
  homogeneous spaces. Thus the proofs of Theorem 2 depend only on the
  basic properties of the heat kernel, which \textit{do not} depend on
  the explicit expression of the heat kernel. Therefore the proofs can
  be extended to the problems posed on other Riemannian manifolds, e.g., on the
  hyperbolic space\footnote{For blow-up problems of semilinear heat equations
  on the hyperbolic space, see the recent works of Bandle, Pozio and
  Tesei~\cite{BPT2011} and Wang and Yin~\cite{WY2014}.}.
\end{remark}

\vspace{2ex}

Now we shall give an a priori estimate on the life span, which was
essentially introduced by Weissler \cite{We1981} for proving the
blow-up of the non-trivial positive solutions of \eqref{P} in the
critical case, see also \cite[Chapter 5]{Hu2011}. For the convenience
of the reader, we prefer to represent it here.
\begin{lemma}\label{le:life}
	Let $p >1$, and $\phi \ge 0$ in $L^{\infty}(\mathbb{R}^{n})$ is not
	identically zero. Suppose that $u$ is a solution of the problem
	\eqref{P} with the initial value $\phi$ in $[0,T^{\ast})$. Then
  \begin{equation}\label{eq:basic-linear}
    \frac{1}{p-1}\Bigl(\sup_{z \in \mathbb{R}^{n}} \int_{\mathbb{R}^{n}}g(z,y,t)\phi(y)\,\mathrm{d}y
    \Bigr)^{1-p}\ge t \quad\text{for}\quad t \in (0,T^{\ast}).
  \end{equation}
In particular, we have the following upper bound estimate on the life span $T^{\ast}$:
\begin{equation*}
  T^{\ast} \le \sup\Bigl\{T > 0 \mid \frac{1}{p-1}\Bigl(\sup_{z \in \mathbb{R}^{n}}
  \int_{\mathbb{R}^{n}}g(z,y,t)\phi(y)\,\mathrm{d}y
  \Bigr)^{1-p}\ge t \quad\text{for}\quad t \in (0,T)\Bigr\}.
\end{equation*}
\end{lemma}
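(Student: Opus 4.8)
The plan is to run a Weissler-type argument: convert the integral (Duhamel) representation into a pointwise differential inequality via Jensen's inequality, and then integrate that ODE comparison. Since $\phi \ge 0$, the maximum principle (equivalently, nonnegativity is preserved along the iteration defining $u$) gives $u \ge 0$, so $\abs{u}^{p-1}u = u^{p}$ and the representation formula becomes $u(x,t) = \int_{\mathbb{R}^n} g(x,y,t)\phi(y)\,\mathrm{d}y + \int_0^t\!\int_{\mathbb{R}^n} g(x,y,t-s)\,u(y,s)^{p}\,\mathrm{d}y\,\mathrm{d}s$. The only structural facts I will need are the conservation of probability (iv) and the fact that $g$ solves the heat equation in each spatial variable.

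First I would fix $t \in (0,T^{\ast})$ and a point $z \in \mathbb{R}^{n}$, and introduce the auxiliary function
\begin{equation*}
  \theta(\tau) := \int_{\mathbb{R}^{n}} g(z,y,t-\tau)\,u(y,\tau)\,\mathrm{d}y, \qquad \tau \in [0,t),
\end{equation*}
so that $\theta(0) = \int_{\mathbb{R}^{n}} g(z,y,t)\phi(y)\,\mathrm{d}y$. Differentiating in $\tau$ is the engine of the proof. The heat kernel satisfies $\partial_{s} g(z,y,s) = \Delta_{y} g(z,y,s)$, whence $\partial_{\tau} g(z,y,t-\tau) = -\Delta_{y} g(z,y,t-\tau)$; inserting $u_{\tau} = \Delta_{y} u + u^{p}$ and integrating by parts (Green's identity, with no boundary contribution thanks to the Gaussian decay of $g$ and the boundedness of $u$ on $[0,t]$), the two Laplacian terms cancel and I obtain $\theta'(\tau) = \int_{\mathbb{R}^{n}} g(z,y,t-\tau)\,u(y,\tau)^{p}\,\mathrm{d}y$.

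Next, since by property (iv) the measure $g(z,\cdot,t-\tau)\,\mathrm{d}y$ is a probability measure on $\mathbb{R}^{n}$ and $s \mapsto s^{p}$ is convex on $[0,\infty)$, Jensen's inequality gives $\theta'(\tau) \ge \theta(\tau)^{p}$. Dividing by $\theta^{p}$ and integrating from $0$ to $\tau$ yields $\frac{1}{p-1}\bigl(\theta(0)^{1-p} - \theta(\tau)^{1-p}\bigr) \ge \tau$; dropping the nonnegative term $\theta(\tau)^{1-p}$ and letting $\tau \to t^{-}$ produces $\frac{1}{p-1}\theta(0)^{1-p} \ge t$, that is, $\frac{1}{p-1}\bigl(\int_{\mathbb{R}^{n}} g(z,y,t)\phi(y)\,\mathrm{d}y\bigr)^{1-p} \ge t$. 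Because this holds for every $z$ and $x \mapsto x^{1-p}$ is decreasing on $(0,\infty)$, taking the supremum over $z$ inside the integral is the same as taking the infimum of the left-hand side over $z$, which delivers \eqref{eq:basic-linear}. The ``in particular'' statement on $T^{\ast}$ is then immediate from the definition of the supremum of the admissible times.

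The main obstacle I anticipate is purely technical: rigorously justifying the differentiation under the integral sign and the integration by parts that drive the computation of $\theta'$. These are legitimate because $u$ is bounded on $[0,t]$ while $g(z,y,t-\tau)$ and its spatial derivatives decay like a Gaussian in $y$ uniformly for $\tau$ in compact subsets of $[0,t)$; one should also note that $\theta$ extends continuously to $\tau = t$ with $\theta(t) = u(z,t)$, so the limit $\tau \to t^{-}$ is harmless. The conceptual core, the cancellation of the linear terms followed by Jensen's inequality, is elementary once these justifications are in place.
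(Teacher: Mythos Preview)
Your argument is correct and follows the same Weissler-type strategy as the paper: test against the backward heat kernel, apply Jensen, and integrate the resulting ODE inequality $\theta' \ge \theta^{p}$. The one genuine methodological difference is how you obtain $\theta'(\tau) = \int_{\mathbb{R}^{n}} g(z,y,t-\tau)\,u(y,\tau)^{p}\,\mathrm{d}y$. You differentiate $\theta$ directly, invoke the PDE $u_{\tau} = \Delta u + u^{p}$ and the heat-kernel equation $\partial_{\tau} g(z,y,t-\tau) = -\Delta_{y} g(z,y,t-\tau)$, and cancel the Laplacians via Green's identity. The paper instead multiplies the Duhamel representation of $u(\cdot,\bar t)$ by $g(\cdot,z,t-\bar t)$, integrates in $x$, and uses the \emph{semigroup property} $\int g(x,z,t-\bar t)\,g(x,y,\bar t - s)\,\mathrm{d}x = g(z,y,t-s)$ to obtain the integrated identity
\[
\theta(\bar t)=\int_{\mathbb{R}^{n}} g(z,y,t)\phi(y)\,\mathrm{d}y + \int_{0}^{\bar t}\!\int_{\mathbb{R}^{n}} g(z,y,t-s)\,u(y,s)^{p}\,\mathrm{d}y\,\mathrm{d}s,
\]
from which the derivative is read off trivially. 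Your route is slightly more direct but requires justifying differentiation under the integral and the spatial integration by parts (which you acknowledge); the paper's route needs only Fubini and the semigroup identity, so it applies verbatim to mild solutions and transfers cleanly to the manifold setting emphasised in Remarks~\ref{re:extand} and~\ref{re:final}. Either way, $G(0)=\theta(0)$ and the conclusion is identical.
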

\begin{proof}
  Take $z \in \mathbb{R}^{n}$ and fix $0<t<T^{\ast}$. Let
  $\overline{t} \in [0,t]$. Since $\phi(x) \ge 0$, by the comparison
  principle it follows that $u(x,t) \ge 0$. Thus we have
  \begin{equation}\label{eq:tmild}
    u(x,\overline{t})=
    \int_{\mathbb{R}^{n}}g(x,y,\overline{t})\phi(y)\,\mathrm{d}y
    +\int_{0}^{\overline{t}}\int_{\mathbb{R}^{n}}g(x,y,\overline{t}-s)
    u^{p}(y,s)\,\mathrm{d}y\mathrm{d}s.
  \end{equation}
  Multiplying \eqref{eq:tmild} by $g(x, z, t-\overline{t})$ and
  integrating with respect to $x$ over $\mathbb{R}^{n}$, we obtain
  \begin{multline*}
    \int_{\mathbb{R}^{n}}g(x,z,t-\overline{t})
    u(x,\overline{t})\,\mathrm{d}x=
    \int_{\mathbb{R}^{n}}g(x,z,t-\overline{t})
    \int_{\mathbb{R}^{n}}g(x,y,\overline{t})\phi(y)\,\mathrm{d}y\mathrm{d}x
    \\+\int_{\mathbb{R}^{n}}g(x,z,t-\overline{t})
    \int_{0}^{\overline{t}}\int_{\mathbb{R}^{n}}g(x,y,\overline{t}-s)
    u^{p}(y,s)\,\mathrm{d}y\mathrm{d}s\mathrm{d}x.
  \end{multline*}
  By (i) and (iii) in Lemma \ref{le:basic}, and by Fubini's theorem,
  \begin{multline*}
    \int_{\mathbb{R}^{n}}g(z,x,t-\overline{t})
    u(x,\overline{t})\,\mathrm{d}x= \int_{\mathbb{R}^{n}}g(z,y,t)
    \phi(y)\,\mathrm{d}y\\
    +
    \int_{0}^{\overline{t}}\int_{\mathbb{R}^{n}}g(z,y,t-s)u^{p}(y,s)\,\mathrm{d}y\mathrm{d}s.
  \end{multline*}
  Since $g(z,y,t-s) \ge 0$ and $\int_{\mathbb{R}^{n}}g(z,y,t-s)
  \,\mathrm{d}y = 1$, by Jensen's inequality, the above equation
  implies
  \begin{multline}
    \int_{\mathbb{R}^{n}}g(z,x,t-\overline{t})
    u(x,\overline{t})\,\mathrm{d}x\ge \int_{\mathbb{R}^{n}}g(z,y,t)
    \phi(y)\,\mathrm{d}y\\+
    \int_{0}^{\overline{t}}\Bigl(\int_{\mathbb{R}^{n}}g(z,y,t-s)u(y,s)\,\mathrm{d}y\Bigr)^{p}\mathrm{d}s.
    \label{eq:3-2G}
  \end{multline}
  Denote the right hand side of \eqref{eq:3-2G} by $G(\overline{t})$:
  \begin{equation*}
    G(\overline{t})\defeq
    \int_{\mathbb{R}^{n}}g(z,y,t)
    \phi(y)\,\mathrm{d}y\\+ \int_{0}^{\overline{t}}
    \Bigl(\int_{\mathbb{R}^{n}}g(z,y,t-s)u(y,s)\,\mathrm{d}y\Bigr)^{p}\mathrm{d}s.
  \end{equation*}
  We have $G(t) > 0$ for $t \in [0,T^\ast)$ and
  \begin{equation}
    G(0)=
    \int_{\mathbb{R}^{n}}g(z,y,t)
    \phi(y)\,\mathrm{d}y.
    \label{eq:xx}
  \end{equation}
  Differentiating $G(\overline{t})$ with respect to $\overline{t}$, by \eqref{eq:3-2G} we have the
  inequality
  \begin{equation*}
      G'(\overline{t})=\Bigl(
      \int_{\mathbb{R}^{n}}g(z,y,t-\overline{t})u(y,\overline{t})\,\mathrm{d}y
      \Bigr)^{p}\ge G^{p}(\overline{t});
  \end{equation*}
  that is
  \begin{equation}\label{eq:b4-2.8}
    G^{-p}(\overline{t})G'(\overline{t})\ge 1.
  \end{equation}
  Integrating \eqref{eq:b4-2.8} with respect to $\overline{t}$ over $[0,t]$, we obtain
  \begin{equation*}
    \frac{1}{1-p}\Bigl( G^{1-p}(t)-G^{1-p}(0) \Bigr)\ge t.
  \end{equation*}
  Thus
  \begin{equation}\label{eq:yy}
    \frac{1}{p-1}G^{1-p}(0)\ge t,
  \end{equation}
  since $p>1$. Then the lemma follows from \eqref{eq:xx}
  and \eqref{eq:yy}.
\end{proof}

\vspace{2ex}

Now we are in a position to prove Theorem \ref{th:main}.

\begin{proof}[Proof of Theorem \ref{th:main}] By the assumption of the theorem, for any $\varepsilon > 0$ there
  exist $z_{k}\in \mathbb{R}^{n}$ and $r_{k} > 0$ for $k=1, 2,
  \ldots$, such that
  \begin{equation}\label{eq:2.6}
    \lim_{k \to \infty} r_{k} = +\infty,
  \end{equation}
  and
  \begin{equation}\label{eq:b4-2.10}
    \frac{\mes \bigl( B(z_{k},r_{k})\cap \{y\mid \phi(y)\ge \alpha\}  \bigr)}{\omega_{n}r^{n}_{k}}
    \ge \overline{D}(\alpha) - \varepsilon,
  \end{equation}
  where $\omega_{n}$ is the volume of $n$-dimensional unit ball.

  Take $\bar{r}_{k} = \sqrt{r_{k}}$. We claim the following lemma,
  which will be proved in the end of this section.
  \begin{lemma}\label{le:b4}
    For any $\delta \in (0,1)$, there exists $K \in \mathbb{N}$ such that for
    $k>K$
    \begin{multline}
      \sup_{x \in B(z_{k}, r_{k} - \bar{r}_{k})}\int_{B(z_{k},
        r_{k})}\mathbbm{1}_{B(x, \bar{r}_{k})}(y)g(x, y,
      T^{\ast})\phi(y)\,\mathrm{d}y\\ \ge
      \frac{1-\delta}{(r_{k}-\bar{r}_{k})^{n}}\Bigl[\alpha\bigl(\overline{D}(\alpha)
      - \varepsilon\bigr)r_{k}^{n} -
      \norm{\phi}_{L^{\infty}(\mathbb{R}^{n})}\bigl(r_{k}^{n} -
      (r_{k}-2\bar{r}_{k})^{n}\bigr)\Bigr].
    \end{multline}
    Here $\mathbbm{1}_{E}$ is the characteristic function of the set $E$.
  \end{lemma}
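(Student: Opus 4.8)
The plan is to bound the supremum from below by an average over the centres $x$ and then exchange the order of integration. Since the integrand is nonnegative, the supremum over $x \in B(z_{k}, r_{k}-\bar{r}_{k})$ dominates the mean value over that ball, so I would first write
\[
  \sup_{x \in B(z_{k}, r_{k} - \bar{r}_{k})}\int_{B(z_{k},r_{k})}\mathbbm{1}_{B(x,\bar{r}_{k})}(y)g(x,y,T^{\ast})\phi(y)\,\mathrm{d}y \ge \frac{1}{\omega_{n}(r_{k}-\bar{r}_{k})^{n}}\int_{B(z_{k},r_{k}-\bar{r}_{k})}\int_{B(z_{k},r_{k})}\mathbbm{1}_{B(x,\bar{r}_{k})}(y)g(x,y,T^{\ast})\phi(y)\,\mathrm{d}y\,\mathrm{d}x.
\]
Because $\mathbbm{1}_{B(x,\bar{r}_{k})}(y)=\mathbbm{1}_{B(y,\bar{r}_{k})}(x)$ and $g(x,y,T^{\ast})=g(y,x,T^{\ast})$ by (i), Fubini's theorem recasts the double integral as $\int_{B(z_{k},r_{k})}\phi(y)\,I(y)\,\mathrm{d}y$, where the inner weight is $I(y)=\int_{B(z_{k},r_{k}-\bar{r}_{k})\cap B(y,\bar{r}_{k})}g(x,y,T^{\ast})\,\mathrm{d}x$.

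The decisive step is a geometric containment: if $y\in B(z_{k},r_{k}-2\bar{r}_{k})$ then $\abs{x-z_{k}}\le\abs{x-y}+\abs{y-z_{k}}< \bar{r}_{k}+(r_{k}-2\bar{r}_{k})=r_{k}-\bar{r}_{k}$ for every $x\in B(y,\bar{r}_{k})$, so $B(y,\bar{r}_{k})\subset B(z_{k},r_{k}-\bar{r}_{k})$ and hence $I(y)=\int_{B(y,\bar{r}_{k})}g(x,y,T^{\ast})\,\mathrm{d}x$. By the isometry invariance (ii) together with the radial form \eqref{eq:same}, this mass is independent of the centre $y$ and equals $\int_{B(0,\bar{r}_{k})}g(0,w,T^{\ast})\,\mathrm{d}w$. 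Since $\bar{r}_{k}=\sqrt{r_{k}}\to+\infty$ by \eqref{eq:2.6}, the conservation of probability (iv) drives this quantity to $1$, so for any prescribed $\delta\in(0,1)$ there is a single $K\in\mathbb{N}$, independent of $y$, with $I(y)\ge 1-\delta$ for all $k>K$ and all admissible $y$. This is precisely the uniform estimate \eqref{eq:2.12}, and I expect this \emph{uniformity}---that one $K$ works simultaneously for every centre $y$---to be the main point requiring care; it is exactly the place where the radial identity \eqref{eq:same} is indispensable.

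Discarding the nonnegative contribution of those $y\notin B(z_{k},r_{k}-2\bar{r}_{k})$ (legitimate since $\phi\ge 0$ and $g\ge 0$) and inserting the bound $I(y)\ge 1-\delta$ on the remaining set, I would arrive at
\[
  \sup_{x \in B(z_{k}, r_{k} - \bar{r}_{k})}\int_{B(z_{k},r_{k})}\mathbbm{1}_{B(x,\bar{r}_{k})}(y)g(x,y,T^{\ast})\phi(y)\,\mathrm{d}y \ge \frac{1-\delta}{\omega_{n}(r_{k}-\bar{r}_{k})^{n}}\int_{B(z_{k},r_{k}-2\bar{r}_{k})}\phi(y)\,\mathrm{d}y.
\]
It remains to estimate the last integral from below, which I would do by writing it as $\int_{B(z_{k},r_{k})}\phi-\int_{B(z_{k},r_{k})\setminus B(z_{k},r_{k}-2\bar{r}_{k})}\phi$. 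The first piece is bounded below using $\phi\ge\alpha$ on $\{\phi\ge\alpha\}$ together with \eqref{eq:b4-2.10}, giving $\alpha\omega_{n}r_{k}^{n}\bigl(\overline{D}(\alpha)-\varepsilon\bigr)$, while the second piece is bounded above by $\norm{\phi}_{L^{\infty}(\mathbb{R}^{n})}$ times the annular volume $\omega_{n}\bigl(r_{k}^{n}-(r_{k}-2\bar{r}_{k})^{n}\bigr)$. Substituting these two estimates and cancelling the common factor $\omega_{n}$ yields exactly the asserted inequality.
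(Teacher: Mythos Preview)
Your proposal is correct and follows essentially the same approach as the paper: average over centres $x\in B(z_k,r_k-\bar r_k)$, apply Fubini, use the containment $B(y,\bar r_k)\subset B(z_k,r_k-\bar r_k)$ for $y\in B(z_k,r_k-2\bar r_k)$ together with the uniform heat--kernel mass estimate coming from \eqref{eq:same} and (iv), and finish with the annular decomposition of $\int_{B(z_k,r_k)}\phi$. The only cosmetic difference is that the paper invokes continuity of $F_k$ to exhibit a specific point $x_k$ attaining at least the average, whereas you use directly that a supremum dominates an average; both yield the stated inequality.
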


  Lemma \ref{le:b4} implies that for $k > K$
  \begin{multline}\label{eq:b4-2.18}
    \sup_{z \in \mathbb{R}^{n}}
    \int_{\mathbb{R}^{n}}g(z,y,T^{\ast})\phi(y)\,\mathrm{d}y \ge
    \sup_{x \in B(z_{k}, r_{k} - \bar{r}_{k})}\int_{B(z_{k},
      r_{k})}\mathbbm{1}_{B(x, \bar{r}_{k})}(y)
    g(x, y, T^{\ast})\phi(y)\,\mathrm{d}y \\
    \ge
    \frac{1-\delta}{(r_{k}-\bar{r}_{k})^{n}}\Bigl[\alpha\bigl(\overline{D}(\alpha)
    - \varepsilon\bigr)r_{k}^{n} -
    \norm{\phi}_{L^{\infty}(\mathbb{R}^{n})}\bigl(r_{k}^{n} -
    (r_{k}-2\bar{r}_{k})^{n}\bigr)\Bigr].
  \end{multline}
  Let $k \to \infty$. \eqref{eq:2.6} and \eqref{eq:b4-2.18} show
  \begin{equation*}
    \sup_{z \in \mathbb{R}^{n}} \int_{\mathbb{R}^{n}}g(z,y,T^{\ast})\phi(y)\,\mathrm{d}y
    \ge (1-\delta)\alpha\bigl(\overline{D}(\alpha) - \varepsilon\bigr);
  \end{equation*}
  and since $\delta$ and $\varepsilon$ were arbitrary, we obtain
  \begin{equation}\label{eq:2.21}
    \sup_{z \in \mathbb{R}^{n}} \int_{\mathbb{R}^{n}}g(z,y,T^{\ast})\phi(y)\,\mathrm{d}y
    \ge \alpha\overline{D}(\alpha).
  \end{equation}
  Now the theorem follows from \eqref{eq:2.21} and Lemma \ref{le:life}
  immediately.
  \end{proof}

\vspace{2ex}

Finally, we should give the proof of Lemma \ref{le:b4}.
\begin{proof}[Proof of Lemma \ref{le:b4}]
  It is sufficient to show that there exists $K$ satisfies the following property: for $k>K$ there exists
  $x_{k} \in B(z_{k},r_{k}-\bar{r}_{k})$ such that
  \begin{multline}\label{eq:a5}
   \int_{B(z_{k},
        r_{k})}\mathbbm{1}_{B(x_{k}, \bar{r}_{k})}(y)g(x_{k}, y,
      T^{\ast})\phi(y)\,\mathrm{d}y \ge
    \frac{1-\delta}{(r_{k}-\bar{r}_{k})^{n}}\Bigl[\alpha\bigl(\overline{D}(\alpha)
    - \varepsilon\bigr)r_{k}^{n} \\-
    \norm{\phi}_{L^{\infty}(\mathbb{R}^{n})}\bigl(r_{k}^{n} -
    (r_{k}-2\bar{r}_{k})^{n}\bigr)\Bigr].
  \end{multline}
  For any $k=1,2,\cdots$, we define a sequence of functions $F_{k}:
  B(z_{k},r_{k}) \to \overline{\mathbb{R}^{+}}$ as
  \begin{equation*}
    F_{k}(x) \defeq \int_{B(z_{k}, r_{k})}\mathbbm{1}_{B(x, \bar{r}_{k})}(y)g(x, y, T^{\ast})\phi(y)\,\mathrm{d}y.
  \end{equation*}
  Then $F_{k}$ is a non-negative, continuous and bounded function.
  Consider the integral of $F_{k}(x)$ over $B(z_{k},
  r_{k}-\bar{r}_{k})$. By Fubini's theorem, we have
  \begin{equation}\label{eq:2.9}
    \begin{split}
      &\int_{B(z_{k}, r_{k}-\bar{r}_{k})}F_{k}(x)\,\mathrm{d}x =
      \int_{B(z_{k},r_{k})}\mathbbm{1}_{B(z_{k},r_{k}-\bar{r}_{k})}(x)F_{k}(x)\,\mathrm{d}x \\
      =&
      \int_{B(z_{k},r_{k})}\mathbbm{1}_{B(z_{k},r_{k}-\bar{r}_{k})}(x)\Bigl(
      \int_{B(z_{k}, r_{k})}\mathbbm{1}_{B(x, \bar{r}_{k})}(y)g(x, y,
      T^{\ast}
      )\phi(y)\,\mathrm{d}y  \Bigr)\,\mathrm{d}x \\
      = &\int_{B(z_{k},r_{k})} \Bigl( \int_{B(z_{k},r_{k})}
      \mathbbm{1}_{B(z_{k},r_{k}-\bar{r}_{k})}(x) \mathbbm{1}_{B(x,
        \bar{r}_{k})}(y)g(x, y, T^{\ast})\,\mathrm{d}x
      \Bigr)\phi(y)\,\mathrm{d}y\\
      =&\int_{B(z_{k}, r_{k})}I_{k}(y)\phi(y)\,\mathrm{d}y,
    \end{split}
  \end{equation}
  where
  \begin{equation*}
    I_{k}(y) \defeq \int_{B(z_{k},r_{k})}
    \mathbbm{1}_{B(z_{k},r_{k}-\bar{r}_{k})}(x) \mathbbm{1}_{B(x, \bar{r}_{k})}(y)g(x,
    y, T^{\ast})\,\mathrm{d}x.
  \end{equation*}
  Since $g(x,y,t)$ is a function of the distance between $x$ and $y$
  for fixed $t$; and since
  \begin{equation*}
    \int_{\mathbb{R}^{n}}g(x, y, T^{\ast})\,\mathrm{d}x = 1 \quad\text{for}\quad y\in \mathbb{R}^{n},
  \end{equation*}
  it follows that for fixed $\delta > 0$ there exists $R > 0$ such that
  \begin{equation}\label{eq:2.12}
    \int_{\mathbb{R}^{n}}\mathbbm{1}_{B(y,\bar{r}_{k})}(x)g(x, y, T^{\ast})\,\mathrm{d}x
    = \int_{B(y, \bar{r}_{k})}g(x, y, T^{\ast}) \,\mathrm{d}x \ge 1 -\delta
  \end{equation}
  for any $y\in \mathbb{R}^{n}$ and any $\bar{r}_{k} > R$. Since
  $\bar{r}_{k} = \sqrt{r_{k}}$ and $r_{k} \to \infty$ as $k \to
  \infty$, there exists $K$ such that
  \begin{equation*}
    \bar{r}_{k} > R \quad\text{for}\quad k > K.
  \end{equation*}
  Thus by \eqref{eq:2.12} we obtain
  \begin{equation}
    \label{eq:2.16b5}
    \int_{\mathbb{R}^{n}}\mathbbm{1}_{B(y,\bar{r}_{k})}(x)g(x, y, T^{\ast})\,\mathrm{d}x
    \ge 1 - \delta \quad\text{for}\quad k > K.
  \end{equation}
  In the following proof we shall always take $k > K$. Moreover, for
  $y \in B(z_{k}, r_{k}-2 \bar{r}_{k})$, it is easily seen that
  \begin{equation}\label{eq:2.13}
    \mathbbm{1}_{B(y,\bar{r}_{k})}(x) = \mathbbm{1}_{B(x, \bar{r}_{k})}(y)
    \quad\text{and}\quad B(y, \bar{r}_{k}) \subset B(z_{k}, r_{k}-\bar{r}_{k}).
  \end{equation}
  Then by \eqref{eq:2.13} and \eqref{eq:2.12} we obtain, for $y \in
  B(z_{k}, r_{k}-2 \bar{r}_{k})$,
  \begin{equation*}
    \begin{split}
      I_{k}(y) &= \int_{B(z_{k}, r_{k}-\bar{r}_{k})}\mathbbm{1}_{B(x,\bar{r}_{k})}(y)g(x, y, T^{\ast})\,\mathrm{d}x\\
      &= \int_{B(z_{k}, r_{k}-\bar{r}_{k})}\mathbbm{1}_{B(y,\bar{r}_{k})}(x)g(x,y,T^{\ast})\,\mathrm{d}x\\
      &= \int_{\mathbb{R}^{n}}\mathbbm{1}_{B(y,\bar{r}_{k})}(x)g(x, y,
      T^{\ast})\,\mathrm{d}x \ge 1-\delta.
    \end{split}
  \end{equation*}
  Thus by the above inequality and the definition of $I_{k}$ we have
  \begin{equation}\label{eq:b1a}
    I_{k}(y)\ge
    \begin{cases}
      1-\delta, \quad &y \in B(z_{k}, r_{k}-2 \bar{r}_{k}),\\
      0,& y \in B(z_{k}, r_{k}) \setminus B(z_{k},
      r_{k}-2 \bar{r}_{k}).
    \end{cases}
  \end{equation}
  By \eqref{eq:2.9} and \eqref{eq:b1a} we obtain
  \begin{equation*}
    \begin{split}
      &\int_{B(z_{k}, r_{k}-\bar{r}_{k})}F_{k}(x)\,\mathrm{d}x
      = \int_{B(z_{k}, r_{k})}I_{k}(y)\phi(y)\,\mathrm{d}y \\
      & =\int_{B(z_{k},
        r_{k}-2\bar{r}_{k})}I_{k}(y)\phi(y)\,\mathrm{d}y
      + \int_{B(z_{k}, r_{k})\setminus B(z_{k}, r_{k}-2\bar{r}_{k})}I_{k}(y)\phi(y)\,\mathrm{d}y\\
      &\ge \int_{B(z_{k},
        r_{k}-2\bar{r}_{k})}I_{k}(y)\phi(y)\,\mathrm{d}y \ge
      (1-\delta)\int_{B(z_{k}, r_{k}-2
        \bar{r}_{k})}\phi(y)\,\mathrm{d}y,
    \end{split}
  \end{equation*}
  which, together with the continuity of $F_{k}(x)$, implies that there
  exists $x_{k} \in B(z_{k}, r_{k}-\bar{r}_{k})$ such that
  \begin{equation}\label{eq:r1-2.21}
    F_{k}(x_{k}) \ge \frac{(1-\delta)\int_{B(z_{k}, r_{k}-2 \bar{r}_{k})}\phi(y)\,\mathrm{d}y}
    {\omega_{n}(r_{k} - \bar{r}_{k})^{n}}.
  \end{equation}
  Since \eqref{eq:b4-2.10} implies that
  \begin{multline*}
    \int_{B(z_{k}, r_{k})}\phi(y) \,\mathrm{d}y \ge \alpha \int_{
      B(z_{k},r_{k})\cap \{y\mid \phi(y)\ge \alpha\} } \,\mathrm{d}y\\
    \ge \alpha\mes \bigl( B(z_{k},r_{k})\cap \{x\mid \phi(x)\ge
    \alpha\} \bigr) \ge \alpha\bigl(\overline{D}(\alpha) -
    \varepsilon\bigr)\omega_{n}r_{k}^{n},
  \end{multline*}
  we obtain
  \begin{multline}\label{eq:2.17}
    \int_{B(z_{k}, r_{k}-2 \bar{r}_{k})}\phi(y)\,\mathrm{d}y =
    \int_{B(z_{k}, r_{k})}\phi(y)\,\mathrm{d}y -
    \int_{B(z_{k}, r_{k}) \setminus B(z_{k}, r_{k}-2 \bar{r}_{k})}\phi(y)\,\mathrm{d}y \\
    \ge \alpha\bigl(\overline{D}(\alpha) -
    \varepsilon\bigr)\omega_{n}r_{k}^{n} -
    \norm{\phi}_{L^{\infty}(\mathbb{R}^{n})}\omega_{n}\bigl(r_{k}^{n}
    - (r_{k}-2\bar{r}_{k})^{n}\bigr).
  \end{multline}
  Then by \eqref{eq:r1-2.21} and \eqref{eq:2.17}
  \begin{multline*}\label{eq:2.18}
    F_{k}(x_{k}) \ge
    \frac{1-\delta}{(r_{k}-\bar{r}_{k})^{n}}\Bigl[\alpha\bigl(\overline{D}(\alpha)
    - \varepsilon\bigr)r_{k}^{n} -
    \norm{\phi}_{L^{\infty}(\mathbb{R}^{n})}\bigl(r_{k}^{n} -
    (r_{k}-2\bar{r}_{k})^{n}\bigr)\Bigr],
  \end{multline*}
  which is exactly \eqref{eq:a5} by the definition of $F_{k}$. This
  completes the proof.
  \end{proof}

\begin{remark}\label{re:final}\rm
  Since in the hyperbolic space $\mathbb{H}^{n}$ the volume of the
  ball of radius $r$ is
  $\sigma^{n-1}(\mathbb{S}^{n-1})\int_{0}^{r}\sinh^{n-1}\eta
  \,\mathrm{d}\eta$, where $\sigma^{n-1}(\mathbb{S}^{n-1})$ is the
  surface area of the $n-1$ Euclidean sphere of radius $1$, see
  \cite[p. 79]{R2006}; and since the heat kernel in the hyperbolic
  space satisfies all the corresponding properties in Lemma
  \ref{le:basic} and \eqref{eq:same}, we can modify the above proofs
  to adapt the similar problem posted in the hyperbolic space, say,
  replacing the $\Delta$ in \eqref{P} with the Laplace-Beltrami
  operator in $\mathbb{H}^{n}$.
\end{remark}

\section{Proving Theorem \ref{th:Ya2011} by  Theorem \ref{th:main}}\label{se:contain}

In this section we shall show that Theorem \ref{th:main} implies the
previous results of Yamauchi in \cite{Ya2011}. To state Yamauchi's
results precisely we recall some notations in \cite{Ya2011}. For $\xi'
\in \mathbb{S}^{n-1}$ and $\delta \in (0, \sqrt{2})$, we set conic
neighbourhood $\Gamma_{\xi'}(\delta)$:
\begin{equation*}
  \Gamma_{\xi'}(\delta) = \{\eta \in \mathbb{R}^{n}\setminus\{0\}
  \mid \bigl\lvert{\xi' - \frac{\eta}{\abs{\eta}}}\bigr\rvert < \delta \},
\end{equation*}
and set
\begin{equation}
  \label{eq:r2-3.0}
  S_{\xi'}(\delta)=\Gamma_{\xi'}(\delta)\cap
  \mathbb{S}^{n-1}.
\end{equation}
Define
\begin{equation*}
  \phi_{\infty}(x') = \liminf_{r\to +\infty}\phi(rx') \quad\text{for}\quad x' \in \mathbb{S}^{n-1}.
\end{equation*}
Yamauchi proved the following theorem.
\begin{theorem}[{\cite[Theorem~1 and
    Theorem~2]{Ya2011}}]\label{th:Ya2011}
  Let $n\ge 2$. Assume that there exist $\xi' \in \mathbb{S}^{n-1}$
  and $\delta > 0$ such that $\essinf_{x' \in
    S_{\xi'}(\delta)}\phi_{\infty}(x') > 0$. Then the classical
  solution for \eqref{P} blows up in finite time, and the blow-up time
  is estimated as
  \begin{equation}
    T^{\ast}\le \frac{1}{p-1}\Bigl(\essinf_{x' \in S_{\xi'}(\delta)}\phi_{\infty}(x') \Bigr)^{1-p}.
  \end{equation}
  Let $n = 1$. Assume that $\max\{\liminf\limits_{x\to +\infty}
  \phi(x), \liminf\limits_{x \to -\infty}\phi(x)\} > 0$. Then the
  classical solution for \eqref{P} blows up in finite time, and the
  blow-up time is estimated as
  \begin{equation}
    T^{\ast} \le \frac{1}{p-1}\Bigl(\max\{\liminf\limits_{x\to +\infty}
    \phi(x), \liminf\limits_{x \to -\infty}\phi(x) \}\Bigr)^{1-p}.
  \end{equation}
\end{theorem}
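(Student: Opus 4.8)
The plan is to derive Theorem~\ref{th:Ya2011} from Theorem~\ref{th:main} by showing that the conic hypothesis forces the density $\overline{D}(\alpha)$ to equal $1$ for every level $\alpha$ below the relevant infimum. Write $m \defeq \essinf_{x'\in S_{\xi'}(\delta)}\phi_{\infty}(x')$ when $n\ge 2$, and $m \defeq \max\{\liminf_{x\to +\infty}\phi(x),\ \liminf_{x\to -\infty}\phi(x)\}$ when $n=1$; in both cases $m>0$ by hypothesis. Since $1-p<0$, the map $s\mapsto \frac{1}{p-1}s^{1-p}$ is continuous and strictly decreasing, so once I know $\overline{D}(\alpha)=1$ for each $\alpha\in(0,m)$, Theorem~\ref{th:main} gives $T^{\ast}\le \frac{1}{p-1}\alpha^{1-p}$ for every such $\alpha$, and letting $\alpha\uparrow m$ yields exactly $T^{\ast}\le\frac{1}{p-1}m^{1-p}$. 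Thus everything reduces to the geometric claim $\overline{D}(\alpha)=1$ for $\alpha\in(0,m)$. (Note $\overline{D}(\alpha)\le 1$ always, so it suffices to bound it below by quantities tending to $1$.)

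For $n\ge 2$, fix $\alpha\in(0,m)$ and for $N\in\mathbb{N}$ set $G_N \defeq \{x'\in S_{\xi'}(\delta)\mid \phi(rx')\ge\alpha\ \text{for all}\ r\ge N\}$. These sets increase with $N$, and since $\phi_{\infty}(x')\ge m>\alpha$ for a.e.\ $x'\in S_{\xi'}(\delta)$, their union exhausts $S_{\xi'}(\delta)$ up to a null set; in particular their surface measure is bounded below for large $N$. I would then fix $N$ large and choose a direction $x_0'$ in the interior of $S_{\xi'}(\delta)$ that is simultaneously a Lebesgue density point of $G_N$ on $\mathbb{S}^{n-1}$; almost every point of $G_N$ has both properties, so such an $x_0'$ exists (and a density point of $G_N$ is automatically interior to the cone). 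The test balls are $B(Lx_0',\beta L)$ for a small fixed $\beta>0$ and $L\to +\infty$, so that the radius $\rho\defeq\beta L\to +\infty$ as the definition of $\overline{D}$ requires.

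The heart of the matter is the density estimate in these balls. Rescaling by $1/L$ maps $B(Lx_0',\beta L)$ onto $B(x_0',\beta)$ with $\mathrm{d}y=L^{n}\mathrm{d}z$; for $z\in B(x_0',\beta)$ one has $\abs{z}\ge 1-\beta$, hence $\abs{Lz}\ge L(1-\beta)\to +\infty$, so for $L$ large every $z$ with $z/\abs{z}\in G_N$ already satisfies $\phi(Lz)\ge\alpha$ because its radial coordinate exceeds the threshold $N$. This gives $\frac{\mes(\{\phi\ge\alpha\}\cap B(Lx_0',\beta L))}{\mes(B(Lx_0',\beta L))}\ge \frac{\mes(K\cap B(x_0',\beta))}{\mes(B(x_0',\beta))}$ for all large $L$, where $K\defeq\{z\ne 0\mid z/\abs{z}\in G_N\}$ is the cone over $G_N$. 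Taking the $\limsup$ over $L$ (equivalently over $\rho$) yields $\overline{D}(\alpha)\ge \mes(K\cap B(x_0',\beta))/\mes(B(x_0',\beta))$, and as $\beta\to 0^{+}$ this ratio tends to $1$ since $x_0'$ is an interior density point of $G_N$. Hence $\overline{D}(\alpha)=1$. I expect the main obstacle to be precisely this last passage: non-rarefaction is a density statement at the scale $r\to\infty$, the opposite regime from the Lebesgue density theorem in $\mathbb{R}^{n}$, so the convergence of the Euclidean volume fraction to the spherical density at $x_0'$ must be routed through the radial projection $z\mapsto z/\abs{z}$ (bi-Lipschitz near $x_0'\ne 0$, hence preserving density points), while keeping the two limits $L\to\infty$ and $\beta\to 0$ in the correct order.

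The case $n=1$ should be recorded separately and is comparatively routine. Here $m=\max\{\liminf_{x\to +\infty}\phi,\ \liminf_{x\to -\infty}\phi\}$; assuming the maximum is attained at $+\infty$ (the other case being symmetric), for each $\alpha\in(0,m)$ there is $R$ with $\phi(x)\ge\alpha$ for all $x\ge R$. Choosing the intervals $B(z_k,r_k)=(z_k-r_k,\,z_k+r_k)$ with $z_k=r_k+R$ and $r_k\to +\infty$, one has $\{\phi\ge\alpha\}\supseteq (R,\,2r_k+R)=B(z_k,r_k)$ up to an endpoint, so the density is $1$ and again $\overline{D}(\alpha)=1$. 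The same limit $\alpha\uparrow m$ then completes the one-dimensional statement.
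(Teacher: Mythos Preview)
Your argument is correct and shares the paper's overall strategy---reduce to $\overline{D}(\alpha)=1$ for each $\alpha\in(0,m)$ and then invoke Theorem~\ref{th:main}---but the $n\ge 2$ implementation is genuinely different. The paper introduces the same sets $S(R)=G_R$ and shows they nearly exhaust $S_{\xi'}(\delta)$ in spherical measure, but it passes to Euclidean balls via the Besicovitch covering theorem applied inside the annular region $\Omega=\{R_1<|x|<R_1+1,\ x/|x|\in S_{\xi'}(\delta)\}$: a disjoint cover of the set $E=\{x\in\Omega: x/|x|\in S(R_1)\}$ together with a pigeonhole argument yields one small ball $B(x_0,r_0)\subset\Omega$ in which $E$ has fraction at least $1-\tau$, and dilating by $\lambda=(\overline R+1)/r_0$ produces the required large test ball. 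You instead select a Lebesgue density point $x_0'$ of $G_N$ on $\mathbb{S}^{n-1}$ and transfer density~$1$ from the sphere to $\mathbb{R}^n$ through the (locally bi-Lipschitz) radial projection, so that the cone $K$ has full density at $x_0'$; the scaling step is then the same as the paper's. Your route is lighter---it avoids Besicovitch altogether and makes the order of the two limits ($L\to\infty$ for fixed $\beta$, then $\beta\downarrow 0$) completely transparent---while the paper's version never needs the sphere-to-$\mathbb{R}^n$ density transfer and produces, for each prescribed $\tau$ and $\overline R$, a concrete ball directly. The $n=1$ case is handled identically in both proofs.
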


It is easily seen that Theorem \ref{th:Ya2011} implies the upper
bounded estimate of Gui and Wang in \cite{GW1995} immediately; that
is, if $\lim\limits_{\abs{x} \to \infty}\psi(x) = k$, then
$\lambda^{p-1} T_{\lambda} \le \frac{1}{p-1}k^{1-p}$. However, Theorem
\ref{th:Ya2011} can not be applied to some simple cases, which can be illustrated
by the following example.

Take a sequence $\{a_{k}\}_{k=1}^{\infty}$ by $a_{k} = k!$.
We see that $\lim\limits_{k\to\infty}\dfrac{a_{k}}{a_{k+1}} = 0$.
Using the sequence $\{a_{k}\}_{k=1}^{\infty}$ we can construct a
function $\Phi\in C(\mathbb{R}^{n})$, which satisfies $0 \le \Phi(x)
\le 1$ and
\begin{equation*}
  \Phi(x) =
  \begin{cases}
  0, \quad \abs{x} \in [a_{2k-1}+ \frac{1}{4}, a_{2k}-\frac{1}{4}], \\
  1, \quad \abs{x} \in [a_{2k}, a_{2k+1}].
  \end{cases}
\end{equation*}
By the definition of $D(\alpha)$ in \eqref{eq:D} it follows that
\begin{multline*}
  D(1) \ge \limsup_{k\to +\infty} \frac{\mes \bigl( \{x \mid \phi(x)
    \ge 1 \} \cap B(0, a_{2k+1}) \bigr)}{\mes
    \bigl(B(0,a_{2k+1})\bigr)} \ge \limsup_{k\to +\infty}
  \frac{a^{n}_{2k+1} - a^{n}_{2k}}{a^{n}_{2k+1}} =1.
\end{multline*}
Thus for the initial datum $u_{0}(x) = \Phi(x)$, by Theorem \ref{th:0}
the life span $T^{\ast}$ of the solution $u$ can be estimated by
\begin{equation*}
  T^{\ast} \le \frac{1}{p-1}\bigl( 1D(1)\bigr)^{1-p} = \frac{1}{p-1}.
\end{equation*}
Since $v(x,t) = \bigl( 1 - (p -1)t \bigr)^{\frac{1}{p-1}}$, which
blows up at $T = \frac{1}{p-1}$, is an upper solution of \eqref{P}
with $v(x, 0) \ge u(x,0)$, by the comparison principle it follows that
$T^{\ast} \ge T = \frac{1}{p-1}$. Thus $T^{\ast} = \frac{1}{p-1}$,
which shows that the minimal time blow-up occurs for the initial datum
$\Phi(x)$.

\vspace{2ex}
Now we use Theorem \ref{th:main} to prove Theorem \ref{th:Ya2011}.

\begin{proof}[Proof of Theorem \ref{th:Ya2011}]
  We shall prove the case $n =
1$ and $n \ge 2$, respectively.

\vspace{2ex} (i) $n =1$. Let us assume that
\begin{equation*}
  A=\liminf_{x\to +\infty} \phi(x) \ge \liminf_{x \to -\infty}\phi(x).
\end{equation*}
Then for any $\varepsilon > 0$ there exists $R > 0$ such that
\begin{equation*}
  \phi(x) \ge A -\varepsilon \quad\text{for}\quad x > R.
\end{equation*}
Hence $\overline{D}(A - \varepsilon) =1$. By Theorem \ref{th:main} we
obtain
\begin{equation*}
  T^{\ast} \le \frac{1}{p-1}(A - \varepsilon)^{1-p}.
\end{equation*}
Since $\varepsilon > 0$ was arbitrary, it follows
\begin{equation*}
  T^{\ast} \le \frac{1}{p-1}A^{1-p}.
\end{equation*}
The proof for $n =1$ is finished.

\vspace{2ex} (ii) $n \ge 2$. Let $A = \essinf_{x' \in
  S_{\xi'}(\delta)}\phi_{\infty}(x')$. It is sufficient to show that,
for any $\overline{R}>0$, $0<\tau<1$ and $0 < \varepsilon < A$, there
exists a ball $B(x,r)$ with $r>\overline{R}$ such that
\begin{equation}\label{eq:a.4}
  \frac{\mes \Bigl( \{y\mid \phi(y)\ge A-\varepsilon\} \cap B(x,r) \Bigr)}{\omega_{n}r^{n}} > 1-\tau.
\end{equation}
We shall show \eqref{eq:a.4} in three steps.

\vspace{2ex}
First, we show that there exists $R_{1}> 0$ such that
\begin{equation}\label{eq:dens}
  \frac{\sigma^{n-1}\Bigl( \{x' \in \mathbb{S}^{n-1} \mid \phi(rx') \ge A-\varepsilon
    \quad\text{for}\quad r > R_{1}\} \cap S_{\xi'}(\delta) \Bigr)}{\sigma^{n-1}(S_{\xi'}(\delta))} > 1 -  \tau,
\end{equation}
where $\sigma^{n-1}(M)$ is the spherical measure for the measurable
set $M \subset \mathbb{S}^{n-1}$ and $S_{\xi'}(\delta)$ is defined by \eqref{eq:r2-3.0}.

We denote
\begin{equation*}
   S(R)\defeq \{x' \in \mathbb{S}^{n-1} \mid \phi(rx') \ge A-\varepsilon
  \quad\text{for}\quad r > R\},
\end{equation*}
and claim that the set
\begin{equation*}
S(R)\cap S_{\xi'}(\delta)
\end{equation*}
is measurable on $\mathbb{S}^{n-1}$. Indeed, since the function
$\phi(x)$ is continuous, we may view $\phi$ as a continuous function
on $\mathbb{S}^{n-1} \times \overline{R^{+}}$, which implies that, for
fixed $r > 0$, $\phi(rx')$ is continuous on $\mathbb{S}^{n-1}$. Then,
for any $r > 0$, the set
\begin{equation*}
  \{x' \in \mathbb{S}^{n-1} \mid \phi(rx') \ge A-\varepsilon\}
\end{equation*}
is closed on $\mathbb{S}^{n-1}$. Since
\begin{equation*}
  \begin{split}
    S(R)&=\{x' \in \mathbb{S}^{n-1} \mid \phi(rx') \ge A-\varepsilon
    \quad\text{for}\quad r > R\} \\&= \bigcap_{r>R}\{x' \in
    \mathbb{S}^{n-1} \mid \phi(rx') \ge A-\varepsilon\},
\end{split}
\end{equation*}
it follows that the set $S(R)$ is closed. Hence $S(R)\cap
S_{\xi'}(\delta)$ is measurable on $\mathbb{S}^{n-1}$, and the claim
is proved.

By the assumption of
Theorem \ref{th:Ya2011}, we see that
\begin{equation*}
  S_{\xi'}(\delta)  = \bigcup_{k=1,2,\ldots}\bigl(  S(k)\cap  S_{\xi'}(\delta)  \bigr).
\end{equation*}
Noticing that
\begin{equation*}
  S(k_{1})\cap S_{\xi'}(\delta) \subset S(k_{2})\cap S_{\xi'}(\delta)
  \quad\text{for}\quad k_{1} < k_{2},
\end{equation*}
by the standard convergence theorem for a sequence of measurable sets
(see, e.g., \cite[Theorem 11.3]{R1976}), we obtain
\begin{equation*}
  \sigma^{n-1}\bigl( S_{\xi'}(\delta)\bigr) = \lim_{k\to \infty} \sigma^{n-1}\bigl(  S(k)\cap  S_{\xi'}(\delta)  \bigr).
\end{equation*}
Thus, for any $0 < \tau < 1$, there exists $K\in \mathbb{N}$ such that
\begin{equation*}
  \frac{ \sigma^{n-1}\bigl( S(k)\cap S_{\xi'}(\delta) \bigr)}{
  \sigma^{n-1}\bigl( S_{\xi'}(\delta) \bigr)} > 1-\tau
\quad\text{for}\quad k\ge K.
\end{equation*}
We can take $R_{1} = K$ to ensure \eqref{eq:dens}.

\vspace{2ex}
Next, we show that there exists a ball $B(x_{0}, r_{0})$ such that
\begin{equation}\label{eq:b4-3.3}
  B(x_{0},r_{0}) \subset \{x\in \mathbb{R}^{n} \mid R_{1}<\abs{x}<R_{1}+1
  \quad\text{and}\quad \frac{x}{\abs{x}} \in  S_{\xi'}(\delta)\},
\end{equation}
and
\begin{equation}\label{eq:smallball}
  \frac{\mes \bigl(\{x\in B(x_{0},r_{0}) \mid \frac{x}{\abs{x}} \in S(R_{1}) \}\bigr)}
  {\mes \bigl(B(x_{0},r_{0})\bigr)} \ge 1-\tau.
\end{equation}
For this we need the following covering lemma, where
$\overline{B(x,r)}$ is the closure of the ball $B(x,r)$.

\begin{lemma}[{\cite[Theorem 2.7]{M2000}} Besicovitch Covering Theorem]\label{le:besicovitch}
  Suppose $\mu$ is a Borel measure on $\mathbb{R}^{n}$ , $E \subset
  \mathbb{R}^{n}$ , $\mu(E) < \infty$, $\mathscr{F}$ is a collection
  of non-trivial closed balls, and $\inf\{r\mid \overline{B(x,r)}\in
  \mathscr{F}\} = 0$ for all $x\in E$. Then there is a countable
  disjoint sub-collection of $\mathscr{F}$ that covers $\mu$ almost
  all of $E$.
\end{lemma}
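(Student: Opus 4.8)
The plan is to deduce this measure-theoretic covering statement from the underlying \emph{geometric} Besicovitch lemma, which supplies a dimensional overlap constant, and then to run a Vitali-type greedy iteration. I would therefore split the argument into two stages: first an essentially combinatorial-geometric covering result with bounded multiplicity, and then a measure-exhaustion procedure that converts it into a countable \emph{disjoint} family covering $\mu$-almost all of $E$.

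\emph{Stage 1: a geometric covering with bounded overlap.} First I would prove the combinatorial core: there is a constant $N_n$, depending only on the dimension $n$, such that for any bounded set $A \subset \mathbb{R}^n$ and any family of closed balls each centered at a point of $A$ (with uniformly bounded radii), one can select a countable subfamily covering $A$ that splits into $N_n$ pairwise-disjoint subfamilies; in particular no point of $\mathbb{R}^n$ lies in more than $N_n$ of the selected balls. I would construct the subfamily greedily, selecting in order of almost-decreasing radius: at each step choose a ball whose center is not yet covered and whose radius is at least half the supremum of the radii of all still-admissible balls. The heart of the matter is the multiplicity bound. Fix a point $x$ lying in several selected balls $B(a_i, r_i)$. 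For two of them with $i<j$, the selection rule forces $a_j \notin B(a_i, r_i)$ and, crucially, makes $r_i$ and $r_j$ comparable; feeding $\abs{x - a_i}\le r_i$, $\abs{x-a_j}\le r_j$ and $\abs{a_i - a_j} > r_i$ into the law of cosines shows that the angle at $x$ between the directions $a_i - x$ and $a_j - x$ is bounded below by some $\theta_0 = \theta_0(n)>0$. Since the number of unit vectors in $\mathbb{R}^n$ that are pairwise separated by an angle at least $\theta_0$ is bounded in terms of $n$ and $\theta_0$ alone, the number of selected balls through $x$ is at most $N_n$.

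\emph{Stage 2: extracting a disjoint subfamily covering $\mu$-a.e.} With $N_n$ in hand, I would reduce to the case $E$ bounded by an exhaustion $E = \bigcup_j \bigl(E\cap B(0,j)\bigr)$, using that restricting $\mathscr{F}$ to balls of small radius preserves the fine-cover hypothesis, and then iterate. Set $q = 1 - \tfrac{1}{2N_n}\in(0,1)$. Apply the geometric lemma to the fine cover $\mathscr{F}$ of $E$ to get a bounded-overlap subfamily covering $E$, split it into $N_n$ disjoint subfamilies, keep finitely many balls carrying at least half the mass, and select the disjoint colour class capturing the most $\mu$-measure. This yields a finite, disjoint family $\mathscr{G}_1 \subset \mathscr{F}$ with $\mu\bigl(E \setminus \bigcup \mathscr{G}_1\bigr) \le q\,\mu(E)$. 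Because the balls of $\mathscr{G}_1$ are closed and finite in number, their complement is open, and the balls of $\mathscr{F}$ lying in this open set still form a fine cover of the uncovered part $E_1 = E \setminus \bigcup \mathscr{G}_1$; repeating produces $\mathscr{G}_2$, disjoint from $\mathscr{G}_1$, with $\mu\bigl(E_1 \setminus \bigcup \mathscr{G}_2\bigr) \le q\,\mu(E_1)$. Iterating, the uncovered measure after $k$ steps is at most $q^{k}\mu(E)\to 0$, and $\bigcup_j \mathscr{G}_j$ is a countable disjoint subcollection of $\mathscr{F}$ covering $\mu$-almost all of $E$.

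I expect the main obstacle to be Stage 1, and within it the quantitative multiplicity estimate: pinning down the angle separation $\theta_0(n)$ requires carefully exploiting the radius-ordering produced by the greedy selection, so that an earlier ball whose interior misses a later center cannot be much smaller than the later ball, and then turning the separation of directions into the dimensional packing bound $N_n$. By contrast, Stage 2 is routine once $N_n$ is available; the only mild subtleties are the reduction to bounded $E$ and maintaining disjointness across iterations, both handled by restricting at each step to balls contained in the open complement of the already-chosen balls.
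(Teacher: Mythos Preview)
The paper does not prove this lemma at all: it is quoted verbatim as \cite[Theorem~2.7]{M2000} and used as a black box in the proof of Theorem~\ref{th:Ya2011}. So there is no ``paper's own proof'' to compare your attempt against.

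That said, your outline is the standard route to the Besicovitch covering theorem (as in Mattila, Evans--Gariepy, or indeed Morgan): a geometric bounded-overlap lemma with dimensional constant $N_n$, followed by a Vitali-type exhaustion to extract a disjoint subfamily covering $\mu$-almost all of $E$. The strategy is sound. The only place where your sketch is a little thin is Stage~1: the assertion that ``$r_i$ and $r_j$ are comparable'' needs care, since the greedy rule gives $r_i \ge \tfrac12 r_j$ for $i<j$ but not an upper bound on $r_i/r_j$; the usual fix is to split the balls through $x$ into those with radius at most a fixed multiple of the smallest one (handled by the angular-separation packing argument you describe) and the remaining ``large'' balls (handled by a separate, easier count using that their centers lie outside each other). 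With that case split made explicit, your argument goes through. For the purposes of this paper, however, simply citing the result---as the authors do---is entirely appropriate.
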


We choose $\mu$ to be the Lebesgue measure on $\mathbb{R}^{n}$,
\begin{equation*}
  E= \{x\in \Omega \mid \frac{x}{\abs{x}} \in
  S(R_{1})\},
\end{equation*}
and take
\begin{equation*}
  \mathscr{F}=\{\overline{B(x,r)} \subset \Omega \mid x\in E, r>0\},
\end{equation*}
where $\Omega = \{x\in \mathbb{R}^{n} \mid R_{1}<\abs{x}<R_{1}+1,
\frac{x}{\abs{x}} \in S_{\xi'}(\delta)\ \}$ is an open set of
$\mathbb{R}^{n}$ by the definition of $S_{\xi'}(\delta)$.  Noticing
that the map $\mathbb{R}^{n}\setminus \{0\} \to \mathbb{S}^{n-1}$,
defined by $x \to \frac{x}{\abs{x}}$, is continuous and $S(R_{1})$ is
a closed set of $\mathbb{S}^{n-1}$, we see that $E$ is a measurable
set of $\mathbb{R}^{n}$. Then by Lemma \ref{le:besicovitch} we can
take a sequence of balls $\{\overline{B(x_{i},r_{i})}\}_{i=1}^{\infty} \subset
\mathscr{F}$ such that
\begin{equation}\label{eq:a.10}
  \overline{B(x_{i},r_{i})}\cap \overline{B(x_{j},r_{j})} = \emptyset \quad\text{for}\quad i\neq j,
\end{equation}
and
\begin{equation}\label{eq:b.8}
  \mes\bigl(E\setminus \bigcup_{i} \overline{B(x_{i},r_{i})}\bigr) = 0.
\end{equation}
Thus by \eqref{eq:dens} and the definition of $E$ we obtain
\begin{equation}\label{eq:cover}
  \begin{split}
    \mes(E)& = \int_{R_{1}}^{R_{1}+1}r^{n-1}
    \int_{S_{\xi'}(\delta)}\mathbbm{1}_{S(R_{1}) \cap S_{\xi'}(\delta )}(\xi)\,\mathrm{d}\xi\mathrm{d}r\\
    & \ge (1-\tau) \int_{R_{1}}^{R_{1}+1}r^{n-1}\int_{S_{\xi'}(\delta)}\,\mathrm{d}\xi\mathrm{d}r\\
    & = (1-\tau)\mes(\Omega).
  \end{split}
\end{equation}
By \eqref{eq:a.10}, \eqref{eq:b.8} and the definition
of $\mathscr{F}$, we have
\begin{equation}\label{eq:b4-3.8}
\mes(E) = \mes \Bigl( \bigcup_{i} \bigl( \overline{B(x_{i},r_{i})} \cap E \bigr) \Bigr)
= \sum_{i} \mes \Bigl( \overline{B(x_{i},r_{i})} \cap E \Bigr)
\end{equation}
and
\begin{equation}\label{eq:b4-3.9}
\mes(\Omega) \ge \mes\Bigl(\bigcup_{i}\overline{B(x_{i},r_{i})}\Bigr) =
\sum_{i}\mes\Bigl(\overline{B(x_{i},r_{i})}\Bigr).
\end{equation}
Inserting \eqref{eq:b4-3.8} and \eqref{eq:b4-3.9} into
\eqref{eq:cover}, we obtain
\begin{equation*}
  \sum_{i} \mes \Bigl(\overline{B(x_{i},r_{i})} \cap E \Bigr) \ge (1-\tau)\sum_{i}\mes
  \Bigl(\overline{B(x_{i},r_{i})}\Bigr).
\end{equation*}
Then we can get a ball $B(x_{0},r_{0})$ in the sequence
$\{{B(x_{i},r_{i})}\}_{i=1}^{\infty}$ satisfying \eqref{eq:b4-3.3} and
\eqref{eq:smallball}.

\vspace{2ex}
Finally, we complete the proof by scaling the ball $B(x_{0}, r_{0})$.
We claim that for $x \in \mathbb{R}^{n}$, $r > 0$ and $\lambda > 1$, it follows that
\begin{equation}\label{eq:b5-1}
  \frac{\mes \Bigl(\{y \in \mathbb{R}^{n} \mid \phi(y) \ge A-\varepsilon\} \cap B(\lambda x, \lambda r)   \Bigr)}
  {\mes \Bigl(B(\lambda x,\lambda r)\Bigr)} \ge \frac{\mes \Bigl( B(x,r) \cap E  \Bigr)}{\mes \Bigl(B( x, r)\Bigr)}.
\end{equation}
The claim can be seen by the following argument. By the definitions of
$E$ and $S(R_{1})$ it follows that if $z \in B(x, r)\cap E$ then
$\phi(\lambda z) \ge A -\varepsilon$ and $\lambda z \in B(\lambda x,
\lambda r)$ for $\lambda > 1$. Thus
\begin{equation*}
  \{y \in \mathbb{R}^{n} \mid
  \phi(y) \ge A-\varepsilon\} \cap B(\lambda x, \lambda r) \supset \{\lambda z \mid z \in B(x,r) \cap E \}.
\end{equation*}
Hence
\begin{multline*}
  \mes \Bigl(\{y \in \mathbb{R}^{n} \mid \phi(y) \ge A-\varepsilon\}
  \cap B(\lambda x, \lambda r) \Bigr)\\ \ge \mes \Bigl( \{\lambda z
  \mid z \in B(x,r) \cap E \Bigr) = \lambda^{n}\mes \Bigl( B(x,r) \cap
  E \Bigr),
\end{multline*}
and the claim follows. By \eqref{eq:b4-3.3} we see that $r_{0} < 1$, thus
 $\frac{\overline{R} + 1}{r_{0}} > 1$.
Scaling the ball $B(x_{0}, r_{0})$ with
$\lambda = \frac{\overline{R} + 1}{r_{0}}$, by \eqref{eq:b5-1} and
\eqref{eq:smallball} we obtain
\begin{equation*}
  \frac{\mes \Bigl(\{y \in \mathbb{R}^{n} \mid \phi(y) \ge
    A-\varepsilon\} \cap
    B(\frac{\overline{R}+1}{r_{0}}x_{0},\overline{R}+1) \Bigr)} {\mes
    \Bigl(B(\frac{\overline{R}+1}{r_{0}}x_{0},\overline{R}+1) \Bigr)}
   \ge \frac{\mes \Bigl( B(x_{0},r_{0}) \cap E \Bigr)}{\mes \Bigl(B(
    x_{0}, r_{0})\Bigr)} \ge 1-\tau.
\end{equation*}
Thus the ball $B(\frac{\overline{R}+1}{r_{0}}x_{0},\overline{R}+1)$
satisfies \eqref{eq:a.4}, and the proof is complete.
\end{proof}

\def\cprime{$'$}
\providecommand{\bysame}{\leavevmode\hbox to3em{\hrulefill}\thinspace}

\end{document}